\theoremstyle{plain}
\newtheorem{thm}{\protect\theoremname}
\newtheorem{thm}{\protect\theoremname}[chapter]
\theoremstyle{plain}
\newtheorem{lem}{\protect\lemmaname}
\newtheorem{lem}{\protect\lemmaname}[chapter]
\providecommand{\lemmaname}{Lemma}
\providecommand{\theoremname}{Theorem}
\begin{document}

\title{Summation of certain trigonometric series with logarithmic coefficients}
\author{Rufus Boyack}
\institute{Rufus Boyack \at
              Department of Physics $\&$ Theoretical Physics Institute, University
of Alberta, Edmonton, Alberta T6G 2E1, Canada \\
          \email{boyack@ualberta.ca}}

\date{Received: date / Accepted: date}

\maketitle

\begin{abstract}
The sums of three trigonometric series with logarithmic coefficients are derived by extending an approach first utilized by Lerch. 
By applying Frullani's theorem to two of these series, two non-trivial integrals involving hyperbolic functions are evaluated in terms of the digamma function. 
\keywords{Trigonometric series \and Logarithmic coefficients \and Digamma function \and Frullani's theorem.}
\subclass{42A20 \and 42A24 \and 42A32}
\end{abstract}

\section{Introduction}

Trigonometric series~\cite{ZygmundBook,Trickovic2008} are of vital importance in mathematics and physics, especially when such series correspond to a Fourier series. 
The physical applications of Fourier series in mathematical physics abound, and often it is desirable to obtain a closed-form expression for their sum. 
One familiar example is Kummer's expression~\cite{WhittakerWatsonBook} for the logarithm of the Gamma function, which involves a series whose coefficients include a logarithmic term. 
In his elegant paper~\cite{Lerch1897} ``$\ddot{\mathrm{U}}$ber eine Formel aus der Theorie der Gammafunction'', 
Lerch derives the sum of another particular trigonometric series with a logarithmic coefficient. 
The result is expressed as follows: if $0<\nu<1$,
\begin{eqnarray}
\sum_{k=2}^{\infty}\log\left(1-\frac{1}{k^{2}}\right)\cos\left(2k\nu\pi\right)&=&2\sin^{2}\left(\nu\pi\right)\left(\psi\left(1\right)-\psi\left(\nu\right)-\log\left(\pi\right)\right)
\nonumber\\&&-\frac{\pi}{2}\sin\left(2\nu\pi\right)-\log\left(2\right).\label{eq:Lerch1}
\end{eqnarray}
By implementing similar methodology, the sums of three other trigonometric series with different logarithmic coefficients than that of Eq.~\eqref{eq:Lerch1} can be derived. 
Thus, it is the purpose of this paper to present a derivation of the following results: 
\begin{thm}
\label{eq:MainTheorem}
If $0<\nu<1$,
\begin{eqnarray}
&&\quad\sum_{k=2}^{\infty}\log\left(\frac{k-1}{k+1}\right)\sin\left(2k\nu\pi\right) \nonumber\\=&&   \sin\left(2\nu\pi\right)\left(\log\left(4\pi\right)-\psi\left(1\right)+\psi\left(\nu\right)\right)+\pi\cos^{2}\left(\nu\pi\right).\label{eq:Series2}\\
&&\sum_{k=2}^{\infty}\log\left(\frac{2k-1}{2k+1}\right)\sin\left(2k\nu\pi\right) \nonumber\\=&& \frac{1}{2}\sin\left(\nu\pi\right)\left(\psi\left(\frac{\nu}{2}\right)-\psi\left(\frac{\nu+1}{2}\right)\right)+\frac{\pi}{2}+\log\left(3\right)\sin\left(2\nu\pi\right).\label{eq:Series3}\\
&&\sum_{k=2}^{\infty}\log\left(\frac{4\left(k^{2}-1\right)}{4k^{2}-1}\right)\cos\left(2k\nu\pi\right) \nonumber\\= && \frac{1}{2}\cos\left(\nu\pi\right)\left(\psi\left(\frac{\nu+1}{2}\right)
-\psi\left(\frac{\nu}{2}\right)-4\log\left(2\right)\cos\left(\nu\pi\right)\right)-\frac{\pi}{2}\sin\left(2\nu\pi\right)\nonumber \\
 &  & +\cos\left(2\nu\pi\right)\left(\log\left(\frac{3\pi}{2}\right)-\psi\left(1\right)+\psi\left(\nu\right)\right).\label{eq:Series4}
\end{eqnarray}
\end{thm}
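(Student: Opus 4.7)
The plan is to extend the generating-function argument that Lerch used to prove~(\ref{eq:Lerch1}). The central object is
\[
L(z):=\sum_{k=1}^{\infty}\log(k)\,z^{k}=-\partial_{s}\mathrm{Li}_{s}(z)\big|_{s=0},\qquad|z|<1,
\]
whose boundary values on $|z|=1$ (interpreted in the Abel sense) can be read off from Hurwitz's functional equation for the polylogarithm. For each of the three series one writes the log-weighted generating function $\sum_{k\geq 2}c_{k}z^{k}$ explicitly in terms of $L$, or of its half-integer analogue $L_{1/2}(z):=\sum_{k\geq 0}\log(k+\tfrac{1}{2})z^{k}$ for the coefficients $\log(2k\pm 1)$, evaluates at $z=e^{2\pi i\nu}$, and takes the imaginary or real part.

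For~(\ref{eq:Series2}) a direct index shift yields
\[
\sum_{k=2}^{\infty}\log\!\left(\frac{k-1}{k+1}\right)z^{k}=(z-z^{-1})L(z)+z\log 2,
\]
whose imaginary part at $z=e^{2\pi i\nu}$ is $2\sin(2\pi\nu)\,\mathrm{Re}\,L(e^{2\pi i\nu})+\sin(2\pi\nu)\log 2$. Laurent-expanding Hurwitz's identity
\[
\mathrm{Li}_{s}(e^{2\pi i\nu})+\mathrm{Li}_{s}(e^{-2\pi i\nu})=\frac{(2\pi)^{s}\bigl[\zeta(1-s,\nu)+\zeta(1-s,1-\nu)\bigr]}{2\,\Gamma(s)\cos(\pi s/2)}
\]
to first order in $s$, using $\zeta(1-s,a)=-s^{-1}-\psi(a)+O(s)$ and $\Gamma(s)=s^{-1}-\gamma+O(s)$, produces
\[
2\,\mathrm{Re}\,L(e^{2\pi i\nu})=\psi(\nu)-\psi(1)+\log(2\pi)+\tfrac{\pi}{2}\cot(\pi\nu),
\]
and the double-angle identity $\sin(2\pi\nu)\cot(\pi\nu)=2\cos^{2}(\pi\nu)$ then yields~(\ref{eq:Series2}). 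For~(\ref{eq:Series3}) the same scheme is applied to $L_{1/2}$; the reduction $L_{1/2}(z)=\frac{1}{2\sqrt{z}}[L(\sqrt{z})-L(-\sqrt{z})]-\frac{\log 2}{1-z}$ forces the Hurwitz parameters to be $\nu/2$ and $(\nu+1)/2$, producing the digamma values appearing in~(\ref{eq:Series3}), while $\cot(\pi\nu/2)+\tan(\pi\nu/2)=2/\sin(\pi\nu)$ produces the additive $\pi/2$; the imaginary parts of $L_{1/2}$, which a priori involve non-elementary Stieltjes constants, drop out after algebraic simplification of the expression $-2\sin^{2}(\pi\nu)\,\mathrm{Im}\,L_{1/2}+\sin(2\pi\nu)\,\mathrm{Re}\,L_{1/2}$. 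Equation~(\ref{eq:Series4}) is obtained from the real-part version of both calculations, using the decomposition $\log(4(k^{2}-1)/(4k^{2}-1))=\log 4+\log(k-1)+\log(k+1)-\log(2k-1)-\log(2k+1)$, the reflection formula $\psi(1-\nu)-\psi(\nu)=\pi\cot(\pi\nu)$, and the double-angle identities to re-package a linear $\log 2$ term as the $-2\log(2)\cos^{2}(\pi\nu)$ appearing in the stated form.

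The main technical obstacle is the Abel regularisation on $|z|=1$: since $L$ and $L_{1/2}$ diverge there, every identity between a boundary value of a generating function and a (conditionally convergent) trigonometric series must be justified as the radial limit $|z|\to 1^{-}$, and the Laurent expansion of Hurwitz's equation must be arranged so that the simple poles of $\Gamma(s)$ and of $\zeta(1-s,\cdot)$ at $s=0$ cancel exactly, leaving a finite $\psi$-valued first-order term. Once~(\ref{eq:Series2}) and~(\ref{eq:Series3}) are established, the two hyperbolic-function integrals advertised in the abstract follow by substituting Frullani's representation $\log(b/a)=\int_{0}^{\infty}(e^{-at}-e^{-bt})/t\,dt$ for the logarithmic coefficients and interchanging sum and integral, the inner geometric sum being evaluated from the imaginary part of $e^{-2(t-2i\nu\pi)}/(1-e^{-(t-2i\nu\pi)})$.
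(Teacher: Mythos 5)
Your plan is correct in outline, but it takes a genuinely different route from the paper. The paper never touches polylogarithms: it starts from Lerch's lemma \eqref{eq:Lerch2} (Kronecker's bilateral sum \eqref{eq:WhittakerWatson1} integrated via a contour argument), specializes to $x=\tfrac12$, evaluates $\sum_n(-1)^n/(\nu+n)$ by digamma functions to get \eqref{eq:Lerch4}, and then obtains all three series by taking real/imaginary parts of that single identity and multiplying by $2\sin(\nu\pi)$ or $2\cos(\nu\pi)$ with product-to-sum formulas (recycling \eqref{eq:Lerch1} and \eqref{eq:Series2} along the way). Your route instead extracts the one analytic input --- the closed form $2\,\mathrm{Re}\,L(e^{2\pi i\nu})=\psi(\nu)-\psi(1)+\log(2\pi)+\tfrac{\pi}{2}\cot(\pi\nu)$, which I checked and which is exactly equivalent to the paper's Lemma at $x=\tfrac12$ --- from the $s$-expansion of Hurwitz's functional equation, and then reduces each series to elementary generating-function algebra; I verified that this reproduces \eqref{eq:Series2}, \eqref{eq:Series3} and \eqref{eq:Series4} exactly. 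What your approach buys is modularity: any coefficient of the form $\sum c_j\log(a_jk+b_j)$ can now be summed mechanically against $e^{2\pi ik\nu}$. What the paper's approach buys is that convergence is never an issue, since Lerch's lemma is an identity between honestly convergent series, whereas you must justify two asserted steps: (i) that the radial limit of the power series $L(rz)$ as $r\to1^-$ agrees with the analytic continuation $-\partial_s\mathrm{Li}_s(z)|_{s=0}$, and then invoke Abel's theorem for the (conditionally convergent) target series; and (ii) that the non-elementary terms (generalized Stieltjes constants hiding in $\mathrm{Im}\,L$) cancel. Point (ii) is most cleanly handled not by "algebraic simplification" but by pairing the logarithms so that the prefactor of $L$ is $z\pm z^{-1}$ (purely real or purely imaginary at $|z|=1$): writing $M(w)=\tfrac12[L(w)-L(-w)]$ with $w=\sqrt z$, the three generating functions become $(z-z^{-1})L+z\log2$, $(w-w^{-1})M+z\log3$, and $(z+z^{-1})L-(w+w^{-1})M+\log4\,\tfrac{z^2}{1-z}-z\log2+z\log3$, and in each case the needed real or imaginary part involves only $\mathrm{Re}\,L$ and $\mathrm{Re}\,M$, so the Stieltjes constants never enter.
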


Further identities can be obtained upon combining Eq.~\eqref{eq:Lerch1} with Eq.~\eqref{eq:Series4} or Eq.~\eqref{eq:Series2} with Eq.~\eqref{eq:Series3}.
In Ref.~\cite{Lerch1897}, Lerch applies Frullani's theorem to Eq.~\eqref{eq:Lerch1} to obtain a closed-form expression for the integral of a particular theta function.
Applying Frullani's theorem to the series appearing in Eq.~\eqref{eq:Series2} and Eq.~\eqref{eq:Series3} leads to two series that can be explicitly summed, and,  as will be shown, this produces two non-trivial integration identities. 
The series given here also provide novel representations of the digamma function, and they can be used to obtain certain integrals of the digamma function~\cite{Connon}. 
The derivation of these results is now presented. 

\section{Derivation of results }
\subsection{An intermediate lemma}
The starting point is Eq.~(1) of Ref.~\cite{Lerch1897}, which states:
\begin{lem}
If $0<x<1$, $0<\nu<1$, 
\begin{equation}
\sum_{n=0}^{\infty}\frac{e^{-2x\pi i\left(\nu+n\right)}}{\nu+n}=-\log\left(2\pi x\right)+\Gamma^{\prime}\left(1\right)-\frac{\Gamma^{\prime}\left(\nu\right)}{\Gamma\left(\nu\right)}-\frac{i\pi}{2}-\sideset{}{^{\prime}}\sum_{k=-\infty}^{\infty}e^{2k\nu\pi i}\log\left(\frac{x+k}{k}\right).\label{eq:Lerch2}
\end{equation}
\end{lem}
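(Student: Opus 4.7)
The plan is to establish the lemma by differentiating both sides with respect to $x$ to reduce it to a Mittag--Leffler partial-fraction identity, then integrating back and fixing the constant of integration from the $x\to 0^{+}$ asymptotics. The terms $-\log(2\pi x)$, $\Gamma'(1)-\Gamma'(\nu)/\Gamma(\nu)$, and $-i\pi/2$ on the right-hand side should all emerge naturally from that boundary analysis.

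Differentiating the left-hand side of \eqref{eq:Lerch2} term by term gives a geometric series,
\begin{equation*}
\frac{d}{dx}\sum_{n=0}^{\infty}\frac{e^{-2\pi i x(\nu+n)}}{\nu+n}=-\frac{2\pi i\,e^{-2\pi i\nu x}}{1-e^{-2\pi i x}},
\end{equation*}
while differentiating the right-hand side gives $-1/x-\sum_{k\neq 0}e^{2\pi i k\nu}/(x+k)$. The identity therefore reduces to the Kronecker/Mittag--Leffler expansion
\begin{equation*}
\sum_{k\in\mathbb{Z}}\frac{e^{2\pi i k\nu}}{x+k}=\frac{2\pi i\,e^{-2\pi i\nu x}}{1-e^{-2\pi i x}},
\end{equation*}
which I would verify by comparing residues at the simple poles $x=-k$ (each side yielding $e^{2\pi i k\nu}$) and checking that the difference is a bounded entire function, hence identically zero; equivalently, Poisson summation applied to $y\mapsto 1/(x+y)$ against the Dirac comb supplies the same conclusion.

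I would then integrate back in $x$. The symmetrized antiderivative is $\sum'_{k}e^{2\pi i k\nu}\log((x+k)/k)$, whose summands are $O(k^{-2})$ so convergence is absolute, together with $-\log x$ from the $k=0$ singularity on the left. The constant of integration I would pin down by sending $x\to 0^{+}$: writing the left-hand side as $e^{-2\pi i x\nu}\sum_{n\geq 0}(e^{-2\pi i x})^{n}/(n+\nu)$ and using the Lerch-transcendent limit $\sum_{n\geq 0}z^{n}/(n+\nu)\sim-\log(1-z)-\gamma-\psi(\nu)$ as $z\to 1^{-}$ (a direct consequence of Gauss's series representation of $\psi$), together with $1-e^{-2\pi i x}=2\pi i x+O(x^{2})$ and $\log(2\pi i x)=\log(2\pi x)+i\pi/2$, produces the finite part $-\log(2\pi x)-i\pi/2-\gamma-\psi(\nu)$; each term $\log((x+k)/k)$ on the right vanishes at $x=0$, so matching constants yields the identity.

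The principal obstacle is this $x\to 0^{+}$ asymptotic analysis: the interchange of limit and infinite sum in the Lerch-transcendent expansion, and the precise tracking of the $-i\pi/2$ phase arising from $\log(2\pi i x)$, must be carried out carefully, since a careless branch choice or a dropped subleading term would spoil the constant. The Mittag--Leffler step is, by comparison, a routine residue calculation.
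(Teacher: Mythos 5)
Your plan is essentially Lerch's original 1895 derivation (differentiate, sum the resulting series, integrate back), which the paper explicitly mentions and deliberately replaces with an alternative argument. The route is viable, but it contains a genuine gap in the step you describe as routine. Term-by-term differentiation of the left-hand side of \eqref{eq:Lerch2} produces $-2\pi i\,e^{-2\pi i\nu x}\sum_{n\geq0}e^{-2\pi inx}$, and this geometric series \emph{diverges} for real $x$, since $|e^{-2\pi ix}|=1$; the original series converges only conditionally (Dirichlet's test), so you are not entitled to conclude that it is differentiable with that derivative. This is precisely why Lerch worked with $ix$ replaced by $x$, i.e.\ inside the unit disc. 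To repair the argument you must either (i) insert a regulator $r^{n}$ with $r<1$, show the regulated derivatives $-2\pi i\,e^{-2\pi i\nu x}/(1-re^{-2\pi ix})$ converge uniformly on compact subsets of $(0,1)$ as $r\to1^{-}$, and invoke the theorem on differentiating a uniform limit, or (ii) reverse the logic entirely: prove the Kronecker expansion first and \emph{integrate} it from $\epsilon/(2\pi)$ to $x$, which only requires term-by-term integration of uniformly convergent symmetric partial sums and never asks whether the left-hand series is differentiable. Note that the differentiation of the right-hand side is comparatively harmless, since the symmetric partial sums of $\sum_{k\neq0}e^{2\pi ik\nu}/(x+k)$ converge uniformly on compacts by summation by parts. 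Ironically, the part you flag as the principal obstacle is the part you have under control.

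Everything else in your outline is sound and genuinely different in execution from the paper. Your constant evaluation is correct: the limit $\sum_{n\geq0}z^{n}/(n+\nu)=-\log(1-z)-\gamma-\psi(\nu)+o(1)$ does hold as $z=e^{-2\pi ix}\to1$ along the unit circle (split off $\sum_{n}z^{n}/(n+1)$; the remainder has coefficients $O(n^{-2})$ and is continuous up to the boundary), and the branch bookkeeping $\log(2\pi ix)=\log(2\pi x)+i\pi/2$ reproduces all three constants, including the $-i\pi/2$. The paper instead integrates the Kronecker identity \eqref{eq:WhittakerWatson1} in $y$ from $\epsilon/(2\pi)$ to $x$ and evaluates the resulting integral $\int e^{-\nu t}(1-e^{-t})^{-1}dt$ by rotating the contour onto the positive real axis: the quarter circle at the origin supplies the $-i\pi/2$, the real-axis piece supplies $\psi(1)-\psi(\nu)$ via the integral representation of the digamma function, and the ray from $2\pi ix$ to $\infty$ supplies the left-hand series via a geometric expansion. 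Your version trades that contour computation for boundary asymptotics of the Lerch transcendent; the paper's version trades the Abelian-limit subtleties for a contour rotation. Both rest on the same Kronecker lemma.
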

Here, the prime notation appearing in the series means that the term $k=0$ is excluded. 
For convenience, logarithmic derivatives of the Gamma function shall be written in terms of the digamma function: $\psi\text{\ensuremath{\left(z\right)}}\equiv\Gamma^{\prime}\left(z\right)/\Gamma\left(z\right)$.
In Ref.~\cite{Lerch1895}, Lerch proved the result in Eq.~\eqref{eq:Lerch2} by taking the derivative of the series on the left-hand side (with $ix$ replaced by $x$) and then summing the resulting series.  
Upon integrating the result, Lerch then verified the Lemma. For completeness, an alternative derivation of Eq.~\eqref{eq:Lerch2} is provided here. 
\begin{proof}
To begin, the following result (due to Kronecker) can be obtained from page 123 of Ref.~\cite{WhittakerWatsonBook}.
If $0<\nu<1$,
\begin{equation}
\lim_{N\rightarrow\infty}\sideset{}{^{\prime}}\sum_{k=-N}^{N}\frac{e^{2k\nu\pi i}}{k+y}=2\pi i\frac{e^{-2\pi i\nu y}}{1-e^{-2\pi iy}}-\frac{1}{y}.\label{eq:WhittakerWatson1}
\end{equation}
Now integrate both sides of Eq.~\eqref{eq:WhittakerWatson1} with respect to $y$, with the domain of integration between $y=\frac{\epsilon}{2\pi}$ and $y=x$, where $0<x<1$, and at the end of the calculation take the limit $\epsilon\rightarrow0$.
Using term-by-term integration, the left-hand side becomes 
\begin{equation*}
\lim_{\epsilon\rightarrow0}\int_{\frac{\epsilon}{2\pi}}^{x}dy\sideset{}{^{\prime}}\sum_{k=-\infty}^{\infty}\frac{e^{2k\nu\pi i}}{k+y}=\sideset{}{^{\prime}}\sum_{k=-\infty}^{\infty}e^{2k\nu\pi i}\log\left(\frac{x+k}{k}\right).
\end{equation*}
Perform the same procedure on the right-hand side of Eq.~\eqref{eq:WhittakerWatson1}, and then substitute $t=2\pi iy$ to obtain 
\begin{eqnarray*}
\lim_{\epsilon\rightarrow0}\int_{\frac{\epsilon}{2\pi}}^{x}dy\left(2\pi i\frac{e^{-2\pi i\nu y}}{1-e^{-2\pi iy}}-\frac{1}{y}\right) & = & \lim_{\epsilon\rightarrow0}\int_{i\epsilon}^{2\pi ix}dt\left(\frac{e^{-\nu t}}{1-e^{-t}}-\frac{1}{t}\right) \\
 & = & \lim_{\epsilon\rightarrow0}\left[\int_{i\epsilon}^{2\pi ix}dt\frac{e^{-\nu t}}{1-e^{-t}}-\log\left(\frac{2\pi x}{\epsilon}\right)\right].
\end{eqnarray*}
The remaining integral is now evaluated by converting it to a contour integral in the complex plane. 
Indeed, consider 
\begin{equation*}
\widetilde{I}=\oint_{\mathcal{C}}dz\frac{e^{-\nu z}}{1-e^{-z}}.
\end{equation*}
Here, the closed contour $\mathcal{C}$ is the concatenation of the contours $C_{1},C_{2},C_{3}$, and $C_{\epsilon}$, where $C_{1}$ is the contour along $\left(0,i\epsilon\right)$ to $\left(0,2\pi ix\right)$,
$C_{2}$ is the contour along $\left(0,2\pi ix\right)$ to $\left(\infty,0\right)$, $C_{3}$ is the contour along $\left(\infty,0\right)$ to $\left(\epsilon,0\right)$, 
and $C_{\epsilon}$ is the quarter-circle contour skirting round from $\left(\epsilon,0\right)$ to $\left(0,i\epsilon\right)$. 
By Cauchy's theorem $\widetilde{I}=0$. The integral $I_{C_{\epsilon}}$ can be computed using the quarter-circle contour, and in the limit $\epsilon\rightarrow0$ the result is $I_{C_{\epsilon}}=\frac{2\pi i}{4}$. 
Combining these results together then implies 
\begin{equation}
\lim_{\epsilon\rightarrow0}\int_{i\epsilon}^{2\pi ix}dt\frac{e^{-\nu t}}{1-e^{-t}}=\lim_{\epsilon\rightarrow0}\left[\int_{\epsilon}^{\infty}dt\frac{e^{-\nu t}}{1-e^{-t}}-\int_{2\pi ix}^{\infty}dt\frac{e^{-\nu t}}{1-e^{-t}}-\frac{i\pi}{2}\right].\label{eq:Int1}
\end{equation}
The first integral on the right-hand side of Eq.~\eqref{eq:Int1} is 
\begin{eqnarray*}
\lim_{\epsilon\rightarrow0}\int_{\epsilon}^{\infty}dt\frac{e^{-\nu t}}{1-e^{-t}} & = & \lim_{\epsilon\rightarrow0}\int_{\epsilon}^{\infty}dt\frac{e^{-\nu t}-e^{-t}+e^{-t}}{1-e^{-t}} \\
 & = & \int_{0}^{\infty}dt\frac{e^{-\nu t}-e^{-t}}{1-e^{-t}}+\lim_{\epsilon\rightarrow0}\int_{\epsilon}^{\infty}dt\frac{e^{-t}}{1-e^{-t}} \\
 & = & \psi\left(1\right)-\psi\left(\nu\right)-\lim_{\epsilon\rightarrow0}\log\left(\epsilon\right).
\end{eqnarray*}
In the third line, the integral representation of the digamma function~(page 247 of Ref.~\cite{WhittakerWatsonBook}) has been used:
\begin{equation*}
\psi\left(z\right)=\int_{0}^{\infty}dt\left(\frac{e^{-t}}{t}-\frac{e^{-zt}}{1-e^{-t}}\right),\quad \mathrm{Re}(z)>0.
\end{equation*}

The second integral on the right-hand side of Eq.~\eqref{eq:Int1} is computed by expanding the denominator as a geometric series. 
Thus,
\begin{equation*}
\int_{2\pi ix}^{\infty}dt\frac{e^{-\nu t}}{1-e^{-t}}=\sum_{n=0}^{\infty}\frac{e^{-2\pi ix\left(\nu+n\right)}}{\nu+n}.
\end{equation*}
Combining the previous results then produces 
\begin{eqnarray*}
\sideset{}{^{\prime}}\sum_{k=-\infty}^{\infty}e^{2k\nu\pi i}\log\left(\frac{x+k}{k}\right) & = & -\log\left(2\pi x\right)+\psi\left(1\right)-\psi\left(\nu\right)\nonumber\\
  &&-\frac{i\pi}{2}-\sum_{n=0}^{\infty}\frac{e^{-2\pi ix\left(\nu+n\right)}}{\nu+n}.
\end{eqnarray*}
Rearranging this expression then gives the desired result in Eq.~\eqref{eq:Lerch2}.
\end{proof}

\subsection{Proof of theorem 1}
We now proceed to prove the results in Eqs.~\eqref{eq:Series2}-\eqref{eq:Series4}. 
\begin{proof}
The first step is to set $x=\frac{1}{2}$ in Eq.~\eqref{eq:Lerch2} and multiply both sides by $e^{i\nu\pi}$. 
After simplifying, the expression obtained is
\begin{eqnarray}
&&\sideset{-}{^{\prime}}\sum_{k=-\infty}^{\infty}e^{\left(2k+1\right)i\nu\pi}\log\left(\frac{2k+1}{2k}\right)\nonumber\\ & = & e^{i\nu\pi}\left(\log\left(\pi\right)-\psi\left(1\right)+\psi\left(\nu\right)+\frac{i\pi}{2}\right)
+\sum_{n=0}^{\infty}\frac{\left(-1\right)^{n}}{\nu+n}.\label{eq:Lerch3}
\end{eqnarray}
The sum on the right-hand side of Eq.~\eqref{eq:Lerch3} can be explicitly computed using the digamma function~\cite{WhittakerWatsonBook,BromwichBook}, which leads to
\begin{eqnarray}
\sideset{-}{^{\prime}}\sum_{k=-\infty}^{\infty}e^{\left(2k+1\right)i\nu\pi}\log\left(\frac{2k+1}{2k}\right) & = & e^{i\nu\pi}\left(\log\left(\pi\right)-\psi\left(1\right)+\psi\left(\nu\right)+\frac{i\pi}{2}\right)\nonumber\\
&&+\frac{1}{2}\left(\psi\left(\frac{\nu+1}{2}\right)-\psi\left(\frac{\nu}{2}\right)\right).\label{eq:Lerch4}
\end{eqnarray}
By equating the real and imaginary parts of each side of this equation, two results are obtained. 
In the case of the imaginary parts, after rewriting the $k$-summation over a single sum from $k=1$ to $k=\infty$, 
the following expression is produced
\begin{eqnarray}
\sum_{k=1}^{\infty}\log\left(\frac{k}{k+1}\right)\sin\left(\left(2k+1\right)\nu\pi\right)&=&\sin\left(\nu\pi\right)\left(\log\left(2\pi\right)-\psi\left(1\right)+\psi\left(\nu\right)\right)\nonumber\\
&&+\frac{\pi}{2}\cos\left(\nu\pi\right).\label{eq:Imag1}
\end{eqnarray}
Now multiply this result by $2\sin\left(\nu\pi\right)$, then use the identity $2\sin\left(a\right)\sin\left(b\right)=\cos\left(a-b\right)-\cos\left(a+b\right)$,
and finally relabel indices to obtain
\begin{eqnarray*}
&& 2\sin^{2}\left(\nu\pi\right)\left(\log\left(2\pi\right)-\psi\left(1\right)+\psi\left(\nu\right)\right)+\frac{\pi}{2}\sin\left(2\nu\pi\right) \\ & = & \sum_{k=1}^{\infty}\log\left(\frac{k}{k+1}\right)\left[\cos\left(2k\nu\pi\right)-\cos\left(\left(2k+2\right)\nu\pi\right)\right] \\
 & = & \sum_{k=2}^{\infty}\log\left(\frac{k}{k+1}\frac{k}{k-1}\right)\cos\left(2k\nu\pi\right)+\log\left(\frac{1}{2}\right)\cos\left(2\nu\pi\right) \\
 & = & -\sum_{k=2}^{\infty}\log\left(1-\frac{1}{k^{2}}\right)\cos\left(2k\nu\pi\right)-\log\left(2\right)\cos\left(2\nu\pi\right).
\end{eqnarray*}
Simplifying this equation then gives the result in Eq.~\eqref{eq:Lerch1}.

It is possible to obtain three more results from Eq.~\eqref{eq:Lerch4}, either by multiplying the imaginary part by $2\cos\left(\nu\pi\right)$, or by taking the real part and then multiplying by $2\sin\left(\nu\pi\right)$ or $2\cos\left(\nu\pi\right)$. 
Indeed, upon multiplying Eq.~\eqref{eq:Imag1} by $2\cos\left(\nu\pi\right)$, then using the identity $2\sin\left(a\right)\cos\left(b\right)=\sin\left(a+b\right)+\sin\left(a-b\right)$,
and after finally relabelling indices, the result obtained is 
\begin{eqnarray*}
&&\sin\left(2\nu\pi\right)\left(\log\left(2\pi\right)-\psi\left(1\right)+\psi\left(\nu\right)\right)+\pi\cos^{2}\left(\nu\pi\right)\nonumber\\ & = & \sum_{k=1}^{\infty}\log\left(\frac{k}{k+1}\right)\left[\sin\left(\left(2k+2\right)\nu\pi\right)+\sin\left(2k\nu\pi\right)\right]\nonumber \\
 & = & \sum_{k=2}^{\infty}\log\left(\frac{k-1}{k+1}\right)\sin\left(2k\nu\pi\right)+\log\left(\frac{1}{2}\right)\sin\left(2\nu\pi\right).
\end{eqnarray*}
Rearranging this equation then produces the result in Eq.~\eqref{eq:Series2}.

Returning now to Eq.~\eqref{eq:Lerch4}, by taking the real part of this equation, and rewriting the $k$-summation over a single sum from $k=1$ to $k=\infty$, the result obtained is
\begin{eqnarray}
&&-\sum_{k=1}^{\infty}\log\left(\frac{\left(2k+1\right)^{2}}{2^{2}k\left(k+1\right)}\right)\cos\left(\left(2k+1\right)\nu\pi\right)\nonumber\\
&=&\cos\left(\nu\pi\right)\left(\log\left(\frac{\pi}{2}\right)-\psi\left(1\right)+\psi\left(\nu\right)\right)\nonumber\\&&
-\frac{\pi}{2}\sin\left(\nu\pi\right)+\frac{1}{2}\left(\psi\left(\frac{\nu+1}{2}\right)-\psi\left(\frac{\nu}{2}\right)\right).\label{eq:Real1}
\end{eqnarray}
Multiply this result by $2\sin\left(\nu\pi\right)$, then again use the identity $2\sin\left(a\right)\cos\left(b\right)=\sin\left(a+b\right)+\sin\left(a-b\right)$,
and finally relabel indices to obtain
\begin{eqnarray*}
 &  & \sin\left(2\nu\pi\right)\left(\log\left(\frac{\pi}{2}\right)-\psi\left(1\right)+\psi\left(\nu\right)\right)-\pi\sin^{2}\left(\nu\pi\right) \\
 &&+\sin\left(\nu\pi\right)\left(\psi\left(\frac{\nu+1}{2}\right)-\psi\left(\frac{\nu}{2}\right)\right) \\
 & = & -\sum_{k=1}^{\infty}\log\left(\frac{\left(2k+1\right)^{2}}{2^{2}k\left(k+1\right)}\right)\left[\sin\left(\left(2k+2\right)\nu\pi\right)-\sin\left(2k\nu\pi\right)\right] \\
 & = & -\sum_{k=2}^{\infty}\log\left(\left(\frac{2k-1}{2k+1}\right)^{2}\left(\frac{k+1}{k-1}\right)\right)\sin\left(2k\nu\pi\right)+\log\left(\frac{9}{8}\right)\sin\left(2\nu\pi\right). 
\end{eqnarray*}
After simplifying this expression and using the result in Eq.~\eqref{eq:Series2}, the result in Eq.~\eqref{eq:Series3} is obtained.

Now multiply Eq.~\eqref{eq:Real1} by $2\cos\left(\nu\pi\right)$, then use the identity $2\cos\left(a\right)\cos\left(b\right)=\cos\left(a+b\right)+\cos\left(a-b\right)$,
and finally relabel indices to obtain 
\begin{eqnarray}
 &  & 2\cos^{2}\left(\nu\pi\right)\left(\log\left(\frac{\pi}{2}\right)-\psi\left(1\right)+\psi\left(\nu\right)\right)-\frac{\pi}{2}\sin\left(2\nu\pi\right) \nonumber\\
 &&+\cos\left(\nu\pi\right)\left(\psi\left(\frac{\nu+1}{2}\right)-\psi\left(\frac{\nu}{2}\right)\right) \nonumber\\
 & = & -\sum_{k=1}^{\infty}\log\left(\frac{\left(2k+1\right)^{2}}{2^{2}k\left(k+1\right)}\right)\left[\cos\left(\left(2k+2\right)\nu\pi\right)+\cos\left(2k\nu\pi\right)\right] \nonumber\\
 & = & -\sum_{k=2}^{\infty}\left(\frac{\left(4k^{2}-1\right)^{2}}{16k^{2}\left(k^{2}-1\right)}\right)\cos\left(2k\nu\pi\right)-\log\left(\frac{9}{8}\right)\cos\left(2\nu\pi\right). \label{eq:Series4a}
\end{eqnarray}
After simplifying this expression and using the result in Eq.~\eqref{eq:Lerch1}, the result in Eq.~\eqref{eq:Series4} is obtained.
\end{proof}
This completes the derivation of the results in Eqs.~\eqref{eq:Lerch1}-\eqref{eq:Series4}; the first result was derived in Ref.~\cite{Lerch1897}, whereas the other three results have been derived in this paper.

\section{Applications }

\subsection{Applying Frullani's theorem}

Frullani's theorem (see page 479 of Ref.~\cite{BromwichBook} or page 656 of Ref.~\cite{HardyBook7}; for further discussion of a more general class of Frullanian integrals
see page 195 of Ref.~\cite{HardyBook5}) leads to the following result (see pages 116-117 of Ref.~\cite{WhittakerWatsonBook}); if $a,b>0$,
\begin{equation*}
\log\left(\frac{b}{a}\right)=\int_{0}^{\infty}\frac{dx}{x}\left(e^{-ax}-e^{-bx}\right).
\end{equation*}
Using this result, two integration identities shall be proved. 
\begin{thm}
If $0<\nu<1$,
\begin{eqnarray}
&&\int_{0}^{\infty}\frac{dx}{x}\sinh\left(x\right)e^{-x}\left(\frac{e^{-x}-2\cos\left(2\nu\pi\right)}{\cosh\left(x\right)-\cos\left(2\nu\pi\right)}\right)\nonumber\\
&=&\log\left(4\pi\right)-\psi\left(1\right)+\psi\left(\nu\right)+\frac{\pi}{2}\cot\left(\nu\pi\right).\label{eq:IntThm1}
\end{eqnarray}
\end{thm}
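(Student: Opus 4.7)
The plan is to derive the integral identity by applying Frullani's theorem term-by-term to the series in Eq.~\eqref{eq:Series2} and then reading off the closed form from the right-hand side of that equation.

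First I would use Frullani's theorem in the form
\begin{equation*}
\log\left(\frac{k-1}{k+1}\right)=\int_{0}^{\infty}\frac{dx}{x}\bigl(e^{-(k+1)x}-e^{-(k-1)x}\bigr)=-2\int_{0}^{\infty}\frac{dx}{x}\,\sinh(x)\,e^{-kx},
\end{equation*}
valid for each $k\geq 2$. Substituting this into the left-hand side of Eq.~\eqref{eq:Series2} and interchanging summation and integration (with justification deferred), the series becomes
\begin{equation*}
-2\int_{0}^{\infty}\frac{dx}{x}\,\sinh(x)\sum_{k=2}^{\infty}e^{-kx}\sin(2k\nu\pi).
\end{equation*}

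Next I would evaluate the inner sum in closed form. Writing $\sin(2k\nu\pi)=\operatorname{Im}e^{2ik\nu\pi}$ and recognising a geometric series gives
\begin{equation*}
\sum_{k=1}^{\infty}e^{-kx}\sin(2k\nu\pi)=\frac{\sin(2\nu\pi)}{2\bigl(\cosh(x)-\cos(2\nu\pi)\bigr)},
\end{equation*}
so that removing the $k=1$ term produces
\begin{equation*}
\sum_{k=2}^{\infty}e^{-kx}\sin(2k\nu\pi)=\frac{\sin(2\nu\pi)}{2\bigl(\cosh(x)-\cos(2\nu\pi)\bigr)}-e^{-x}\sin(2\nu\pi).
\end{equation*}
Combining these two terms over a common denominator and using the identity $1-2e^{-x}\cosh(x)=-e^{-2x}$, the bracket collapses to
\begin{equation*}
\frac{e^{-x}\bigl(2\cos(2\nu\pi)-e^{-x}\bigr)}{2\bigl(\cosh(x)-\cos(2\nu\pi)\bigr)},
\end{equation*}
which after multiplication by $-2\sinh(x)$ yields precisely $\sin(2\nu\pi)$ times the integrand appearing in the theorem. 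Dividing through by $\sin(2\nu\pi)$ and invoking Eq.~\eqref{eq:Series2} then gives
\begin{equation*}
\log(4\pi)-\psi(1)+\psi(\nu)+\frac{\pi\cos^{2}(\nu\pi)}{\sin(2\nu\pi)},
\end{equation*}
and the final trigonometric simplification $\pi\cos^{2}(\nu\pi)/\sin(2\nu\pi)=(\pi/2)\cot(\nu\pi)$ (from the double-angle formula) produces the claimed right-hand side.

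The main obstacle is rigorously justifying the interchange of the sum with the Frullani integral, because the sum $\sum_{k\ge 2}\log((k-1)/(k+1))\sin(2k\nu\pi)$ is only conditionally convergent and the individual Frullani integrands do not admit an integrable majorant. The cleanest remedy is to truncate the Frullani representation to the domain $x\in[\delta,\infty)$, where the partial sums converge uniformly in $x$ and Fubini applies to every finite $N$; one then verifies by a direct estimate near $x=0$ (using $\sinh(x)/x\to 1$ and the finiteness of $(1-2\cos(2\nu\pi))/(1-\cos(2\nu\pi))$ for $0<\nu<1$) and at infinity (where the integrand decays like $e^{-x}/x$) that the $\delta\to 0$ and $N\to\infty$ limits commute, producing the integral in its stated form.
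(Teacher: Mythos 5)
Your proposal is correct and follows essentially the same route as the paper's own proof: apply Frullani's theorem to the logarithm in Eq.~\eqref{eq:Series2}, interchange sum and integral, evaluate $\sum_{k\ge 2}e^{-kx}\sin(2k\nu\pi)$ in closed form (you subtract the $k=1$ term from the full geometric sum, the paper sums from $k=2$ directly, with identical result), and divide by $\sin(2\nu\pi)$. Your closing remarks on justifying the interchange of summation and integration go beyond what the paper records, but the substance of the argument is the same.
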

\begin{proof}
Consider the result in Eq.~\eqref{eq:Series2}. 
Applying Frullani's theorem to the logarithm in the summand leads to 
\begin{eqnarray}
 &    & \sin\left(2\nu\pi\right)\left(\log\left(4\pi\right)-\psi\left(1\right)+\psi\left(\nu\right)\right)+\pi\cos^{2}\left(\nu\pi\right)\nonumber\\
 & = & \sum_{k=2}^{\infty}\int_{0}^{\infty}\frac{dx}{x}\left(e^{-\left(k+1\right)x}-e^{-\left(k-1\right)x}\right)\sin\left(2k\nu\pi\right)\nonumber \\
 & = & -\int_{0}^{\infty}\frac{dx}{x}2\sinh\left(x\right)\sum_{k=2}^{\infty}e^{-kx}\sin\left(2k\nu\pi\right).\label{eq:Frullani1}
\end{eqnarray}
The series above can be exactly summed, as shown forthwith.
\begin{eqnarray*}
\sum_{k=2}^{\infty}e^{-kx}\sin\left(2k\nu\pi\right) & = & \text{Im}\sum_{k=2}^{\infty}e^{-k\left(x-2i\nu\pi\right)} \\
 & = & \text{Im}\frac{\exp\left(4i\nu\pi-2x\right)}{1-\exp\left(2i\nu\pi-x\right)} \\
 & = & \frac{1}{2}e^{-x}\sin\left(2\nu\pi\right)\left(\frac{2\cos\left(2\nu\pi\right)-e^{-x}}{\cosh\left(x\right)-\cos\left(2\nu\pi\right)}\right).
\end{eqnarray*}
Inserting this identity into Eq.~\eqref{eq:Frullani1} and then simplifying leads to the result in Eq.~\eqref{eq:IntThm1}.
\end{proof}

\begin{thm}
If $0<\nu<1$,
\begin{eqnarray}
&&\int_{0}^{\infty}\frac{dx}{x}\sinh\left(x\right)e^{-2x}\left(\frac{e^{-2x}-2\cos\left(2\nu\pi\right)}{\cosh\left(2x\right)-\cos\left(2\nu\pi\right)}\right)\nonumber\\
&=&\frac{\pi}{2}\text{\ensuremath{\mathrm{cosec}}}\left(2\nu\pi\right)+\log\left(3\right)
+\frac{1}{4}\sec\left(\nu\pi\right)\left(\psi\left(\frac{\nu}{2}\right)-\psi\left(\frac{\nu+1}{2}\right)\right).\label{eq:IntThm2}
\end{eqnarray}
\end{thm}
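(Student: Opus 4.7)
The plan is to mirror the proof of the preceding theorem, starting this time from Eq.~\eqref{eq:Series3} rather than Eq.~\eqref{eq:Series2}. First I would apply Frullani's theorem to each logarithm in the summand, writing $\log\bigl(\tfrac{2k-1}{2k+1}\bigr) = \int_{0}^{\infty}\tfrac{dx}{x}\bigl(e^{-(2k+1)x}-e^{-(2k-1)x}\bigr)$, and interchange summation and integration (justifiable in the same manner as in the previous theorem). Using the factorisation $e^{-(2k+1)x} - e^{-(2k-1)x} = -2\sinh(x)\,e^{-2kx}$ converts the left-hand side of Eq.~\eqref{eq:Series3} into $-\int_{0}^{\infty}\tfrac{dx}{x}\,2\sinh(x)\sum_{k=2}^{\infty}e^{-2kx}\sin(2k\nu\pi)$.

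Next I would sum the inner series in closed form. This is exactly the geometric-series manipulation performed in the preceding proof, but with $x$ replaced by $2x$ throughout, so it yields
\begin{equation*}
\sum_{k=2}^{\infty}e^{-2kx}\sin(2k\nu\pi) = \frac{1}{2}e^{-2x}\sin(2\nu\pi)\left(\frac{2\cos(2\nu\pi)-e^{-2x}}{\cosh(2x)-\cos(2\nu\pi)}\right).
\end{equation*}
Substituting this back and absorbing the overall minus sign into the numerator produces $\sin(2\nu\pi)$ multiplied by the integral appearing on the left-hand side of Eq.~\eqref{eq:IntThm2}.

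Finally I would divide both sides by $\sin(2\nu\pi) = 2\sin(\nu\pi)\cos(\nu\pi)$ and match the contributions on the right-hand side of Eq.~\eqref{eq:Series3} to those of Eq.~\eqref{eq:IntThm2}: the $\log(3)\sin(2\nu\pi)$ piece becomes $\log(3)$, the $\pi/2$ piece becomes $(\pi/2)\mathrm{cosec}(2\nu\pi)$, and the factor $\tfrac{1}{2}\sin(\nu\pi)/\sin(2\nu\pi) = \tfrac{1}{4}\sec(\nu\pi)$ converts the digamma difference into the prefactor in Eq.~\eqref{eq:IntThm2}.

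The main obstacle is purely algebraic bookkeeping: tracking the signs during the geometric-series manipulation and verifying that, after division by $\sin(2\nu\pi)$, the three contributions on the right-hand side of Eq.~\eqref{eq:Series3} reassemble exactly in the form claimed. No analytic input beyond the tools already used in the preceding theorem is required.
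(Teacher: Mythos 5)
Your proposal is correct and follows essentially the same route as the paper: apply Frullani's theorem to $\log\bigl(\tfrac{2k-1}{2k+1}\bigr)$ in Eq.~\eqref{eq:Series3}, interchange sum and integral, evaluate $\sum_{k=2}^{\infty}e^{-2kx}\sin(2k\nu\pi)$ by the same geometric-series computation with $x\mapsto 2x$, and divide through by $\sin(2\nu\pi)$. The algebra you outline (in particular $\tfrac{1}{2}\sin(\nu\pi)/\sin(2\nu\pi)=\tfrac{1}{4}\sec(\nu\pi)$) checks out and reproduces Eq.~\eqref{eq:IntThm2} exactly.
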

\begin{proof}
Applying Frullani's theorem to the series in Eq.~\eqref{eq:Series3} gives
\begin{eqnarray}
&&\frac{1}{2}\sin\left(\nu\pi\right)\left(\psi\left(\frac{\nu}{2}\right)-\psi\left(\frac{\nu+1}{2}\right)\right)+\frac{\pi}{2}+\log\left(3\right)\sin\left(2\nu\pi\right)\nonumber\\ 
&=& \sum_{k=2}^{\infty}\int_{0}^{\infty}\frac{dx}{x}\left(e^{-\left(2k+1\right)x}-e^{-\left(2k-1\right)x}\right)\sin\left(2k\nu\pi\right)\nonumber \\
&=& -\int_{0}^{\infty}\frac{dx}{x}2\sinh\left(x\right)\sum_{k=2}^{\infty}e^{-2kx}\sin\left(2k\nu\pi\right).\label{eq:Frullani2}
\end{eqnarray}
The sum can be computed as shown previously, and the result is 
\begin{equation*}
\sum_{k=2}^{\infty}e^{-2kx}\sin\left(2k\nu\pi\right)=\frac{1}{2}e^{-2x}\sin\left(2\nu\pi\right)\left(\frac{2\cos\left(2\nu\pi\right)-e^{-2x}}{\cosh\left(2x\right)-\cos\left(2\nu\pi\right)}\right).
\end{equation*}
Inserting this identity into Eq.~\eqref{eq:Frullani2} and then simplifying leads to the result in Eq.~\eqref{eq:IntThm2}.
\end{proof}
These two integration results are non-trivial since neither Maple nor Mathematica appear to be able to calculate the closed-form expressions.
Nevertheless, numerical evaluation of these integrals for arbitrary values of $\nu\in(0,1)$ agrees with the analytical results.

\subsection{Particular results for the cases $\nu=0$ and $\nu=1$}

As discussed in Ref.~\cite{Lerch1897}, the validity of the result in Eq.~\eqref{eq:Lerch1} can be extended from $0<\nu<1$ to $0\leq\nu\leq1$.
For $\nu=0$ and $\nu=1$, this leads to the result 
\begin{equation}
\label{eq:InfSeries0}
\sum_{k=2}^{\infty}\log\left(1-\frac{1}{k^{2}}\right)=-\log\left(2\right).
\end{equation}

For the series in Eq.~\eqref{eq:Series4}, the logarithm tends to $-3/(4k^2)$, as $k\rightarrow\infty$, and thus the series is absolutely convergent. 
Hence, the validity of the result in Eq.~\eqref{eq:Series4} can be extended from $0<\nu<1$ to $0\leq\nu\leq1$. 
Setting $\nu=0$ (or $\nu=1)$ then leads to the summation identity
\begin{equation}
\label{eq:InfSeries1}
\sum_{k=2}^{\infty}\log\left(\frac{4\left(k^{2}-1\right)}{4k^{2}-1}\right)=\log\left(\frac{3\pi}{16}\right).
\end{equation} 
The proof of this results is as follows. 
\begin{proof}Consider the $\nu=0$ limit of Eq.~\eqref{eq:Series4}: 
\begin{eqnarray*}
 & & \sum_{k=2}^{\infty}\log\left(\frac{4\left(k^{2}-1\right)}{4k^{2}-1}\right)\\
 & = & \lim_{\nu\rightarrow0}\biggl[\frac{1}{2}\cos\left(\nu\pi\right)\left(\psi\left(\frac{\nu+1}{2}\right)-\psi\left(\frac{\nu}{2}\right)-4\log\left(2\right)\cos\left(\nu\pi\right)\right)-\frac{\pi}{2}\sin\left(2\nu\pi\right)\\
 &  & +\cos\left(2\nu\pi\right)\left(\log\left(\frac{3\pi}{2}\right)-\psi\left(1\right)+\psi\left(\nu\right)\right)\biggr]\\
 & = & \frac{1}{2}\left(\psi\left(\frac{1}{2}\right)-\psi\left(1\right)-4\log\left(2\right)\right) + \log\left(\frac{3\pi}{2}\right)\\
 & = & \log\left(\frac{3\pi}{16}\right).
\end{eqnarray*}
\end{proof}
In the last step we have used~\cite{BromwichBook}: $\psi\left(1\right)=-\gamma$ and $\psi\left(\frac{1}{2}\right)=-\gamma-2\log\left(2\right)$, where $\gamma$ is the Euler-Mascheroni constant. 
The same result is obtained if the limit $\nu\rightarrow1$ is taken in Eq.~\eqref{eq:Series4}.
As another example, setting $\nu=0$ (the same result is obtained if $\nu=1$) in Eq.~\eqref{eq:Series4a} gives
\begin{equation}
\label{eq:InfSeries2}
\sum_{k=2}^{\infty}\log\left(\frac{\left(4k^{2}-1\right)^{2}}{16k^{2}\left(k^{2}-1\right)}\right)=\log\left(\frac{128}{9\pi^{2}}\right).
\end{equation}
\begin{proof}
Consider the $\nu=0$ limit of Eq.~\eqref{eq:Series4a} :
\begin{eqnarray*}
 & & \sum_{k=2}^{\infty}\log\left(\frac{\left(4k^{2}-1\right)^{2}}{16k^{2}\left(k^{2}-1\right)}\right)\\
 & = & -\lim_{\nu=0}\biggl[\cos\left(\nu\pi\right)\left(\psi\left(\frac{\nu+1}{2}\right)-\psi\left(\frac{\nu}{2}\right)\right)+\log\left(\frac{9}{8}\right)\cos\left(2\nu\pi\right)\\
 &  &+2\cos^{2}\left(\nu\pi\right)\left(\log\left(\frac{\pi}{2}\right)-\psi\left(1\right)+\psi\left(\nu\right)\right) -\frac{\pi}{2}\sin\left(2\nu\pi\right)\biggr] \\
 & = & \psi\left(1\right)-\psi\left(\frac{1}{2}\right)-\log\left(\frac{9}{8}\right)-2\log\left(\frac{\pi}{2}\right) \\
 & = & \log\left(\frac{128}{9\pi^{2}}\right).
\end{eqnarray*}
\end{proof}

\subsection{Infinite products}

The infinite series results in Eqs.~\eqref{eq:InfSeries0}-\eqref{eq:InfSeries2} are particular cases of a more general result~\cite{Chamberland2013} involving infinite products. 
Indeed, as an example, if we let $k=m+2$, then the result in Eq.~\eqref{eq:InfSeries2} can be expressed as the following infinite product
\begin{eqnarray}
\label{eq:InfProd}
\prod_{m=0}^{\infty}\frac{\left[\left(m+\frac{5}{2}\right)\left(m+\frac{3}{2}\right)\right]^2}{\left(m+3\right)\left(m+2\right)^2\left(m+1\right)}=\frac{128}{9\pi^2}.
\end{eqnarray}
According to theorem 1.1 of Ref.~\cite{Chamberland2013}, the infinite product above evaluates to $\Gamma(3)\left(\Gamma(2)\right)^2\Gamma(1)/\left[\Gamma(5/2)\Gamma(3/2)\right]^2=2^7/(9\pi^2)$,
in agreement with Eq.~\eqref{eq:InfProd}. The results in Eqs.~\eqref{eq:InfSeries0}-\eqref{eq:InfProd} can also be confirmed by using Maple or Mathematica. 

Since the exponential mapping $x\mapsto\exp(x)$ is a continuous function and a group isomorphism from $\left(\mathbb{R},+\right)$ to $\left(\mathbb{R}_{>0},\times\right)$
all of the results in Theorem~\ref{eq:MainTheorem} can be equivalently expressed as infinite products, by taking the exponential of both sides of the equations. 
As one non-trivial example, Wallis' formula~\cite{BromwichBook} for $\pi$ can be derived. The proof is as follows. Set $\nu=\frac{1}{4}$ in Eq.~\eqref{eq:Series2} to obtain
\begin{eqnarray*}
\quad\sum_{k=2}^{\infty}\log\left(\frac{k-1}{k+1}\right)\sin\left(\frac{k\pi}{2}\right) &&= \sum_{l=1}^{\infty}\left(-1\right)^{l}\log\left(\frac{l}{l+1}\right)\nonumber\\
&&= \log\left(4\pi\right)-\psi\left(1\right)+\psi\left(\frac{1}{4}\right)+\frac{\pi}{2}\\
&&=\log\left(\frac{\pi}{2}\right).
\end{eqnarray*}
In the last step we have used~\cite{BromwichBook}: $\psi\left(\frac{1}{4}\right)=-\frac{\pi}{2}-3\log\left(2\right)+\psi\left(1\right)$.
Taking the exponential of both sides of the equation above then gives Wallis' formula:
\begin{equation*}
\prod_{l=1}^{\infty}\left[\left(\frac{l}{l+1}\right)^{\left(-1\right)^{l}}\right]=\prod_{k=1}^{\infty}\left(\frac{2k}{2k-1}\cdot\frac{2k}{2k+1}\right)=\frac{\pi}{2}.
\end{equation*}

\begin{acknowledgements}
This research was performed while the author was supported by the Department of Physics and Theoretical Physics Institute at the University of Alberta.
\end{acknowledgements}

\section*{Compliance with ethical standards}
\textbf{Conflict of interest}: The author declares that he has no conflict of interest.\\
\textbf{Ethical approval}: This article does not contain any studies with human participants or animals performed by the author.\\
\textbf{Informed consent}: For this type of study informed consent was not required.

\end{document}